\let\cal\mathcal
\let\frak\mathfrak
\let\Bbb\mathbb
\def\>{\relax\ifmmode\mskip.666667\thinmuskip\relax\else\kern.111111em\fi}
\def\<{\relax\ifmmode\mskip-.333333\thinmuskip\relax\else\kern-.0555556em\fi}
\def\vsk#1>{\vskip#1\baselineskip}
\def\vv#1>{\vadjust{\vsk#1>}\ignorespaces}
\def\vvn#1>{\vadjust{\nobreak\vsk#1>\nobreak}\ignorespaces}
\def\vvgood{\vadjust{\penalty-500}} \let\alb\allowbreak
\let\dsty\displaystyle \let\tsty\textstyle
 \let\sssty\scriptscriptstyle
\def\fratop{\genfrac{}{}{0pt}1}
\def\tsum{\mathop{\tsty\sum}\limits}
\def\sskip{\par\vsk.2>}
\let\Medskip\medskip
\def\medskip{\par\Medskip}
\let\Bigskip\bigskip
\def\bigskip{\par\Bigskip}
\let\Maketitle\maketitle
\def\maketitle{\Maketitle\thispagestyle{empty}\let\maketitle\empty}
\newtheorem{thm}{Theorem}[section]
\newtheorem{cor}[thm]{Corollary}
\newtheorem{lem}[thm]{Lemma}
\numberwithin{equation}{section}
\theoremstyle{definition}
\newtheorem*{rem}{Remark}
\newtheorem*{example}{Example}
\def\beq{\begin{equation}}
\def\eeq{\end{equation}}
\def\be{\begin{equation*}}
\def\ee{\end{equation*}}
\def\bean{\begin{eqnarray}}
\def\eean{\end{eqnarray}}
\def\bea{\begin{eqnarray*}}
\def\eea{\end{eqnarray*}}
\def\Ref#1{{\rm(\ref{#1})}}
\let\al\alpha
\let\gm\gamma  
\let\dl\delta \let\Dl\Delta 
 \let\eps\varepsilon \let\epsilon\eps
\let\la\lambda 
\let\si\sigma 
\let\pho\phi \let\phi\varphi
 \let\Om\Omega
\let\Hat\widehat
\let\der\partial
\let\ge\geqslant
\let\le\leqslant
\let\ox\otimes
\def\C{{\Bbb C}}
\def\Z{{\Bbb Z}}
\def\gl{\frak{gl}}
\def\Bc{{\cal B}}
\def\Cc{{\cal C}}
\def\Ec{{\cal E}}
\def\Fc{{\cal F}}
\def\Mc{{\cal M}}
\def\Fh{\Hat\Fc}
\def\Qh{\Hat Q}
\def\lsym#1{#1\alb\ldots\relax#1\alb} \def\lc{\lsym,}
\let\on\operatorname
\def\diag{\on{diag}}
\def\End{\on{End}}
\def\rdet{\on{rdet}}
\def\tr{\on{tr}}
\def\Wr{\on{Wr}}
\def\sdag{{\sssty\dag}}
\def\gln{\gl_N}
\def\glnt{\gln[t]}
\def\Ugln{U(\gln)}
\def\Uglnt{U(\glnt)}
\let\bs\boldsymbol
\let\mc\mathcal
\let\nc\newcommand
\nc{\pone}{\C{\mathbb P}^1}
\def\B{{\mc B}}
\def\D{{\mc D}}
\nc{\g}{{\mathfrak g}}
\nc{\h}{{\mathfrak h}}
\nc{\m}{{\mathfrak m}}
\nc{\n}{{\mathfrak n}}
\def\reg{{\it reg}}
\def\sing{{\it\>sing}}
\nc{\ep}{\epsilon}
\def\KZ/{{\sl KZ\/}}
\let\kk K
\let\yh h
\begin{document}

\hrule width0pt
\vsk->

\title[Gaudin Hamiltonians generate the Bethe algebra]
{Gaudin Hamiltonians generate the Bethe algebra of\\
a tensor power of vector representation of {\large $\gln$}}

\author[E.\,Mukhin, V.\,Tarasov, and \>A.\,Varchenko]
{E.\,Mukhin$\>^*$, V.\,Tarasov$\>^\star$, and \>A.\,Varchenko$\>^\diamond$}

\maketitle

\begin{center}
\vsk-.2>
{\it $\kern-.4em^{*,\star}\<$Department of Mathematical Sciences,
Indiana University\,--\>Purdue University Indianapolis\kern-.4em\\
402 North Blackford St, Indianapolis, IN 46202-3216, USA\/}

\medskip
{\it $^\star\<$St.\,Petersburg Branch of Steklov Mathematical Institute\\
Fontanka 27, St.\,Petersburg, 191023, Russia\/}

\medskip
{\it $^\diamond\<$Department of Mathematics, University of North Carolina
at Chapel Hill\\ Chapel Hill, NC 27599-3250, USA\/}
\end{center}

{\let\thefootnote\relax
\footnotetext{\vsk-.8>\noindent
$^*$\,Supported in part by NSF grant DMS-0601005\\
$^\star$\,Supported in part by RFFI grant 08-01-00638\\
$^\diamond$\,Supported in part by NSF grant DMS-0555327}}

\medskip
\begin{abstract}
We show that the Gaudin Hamiltonians $H_1\lc H_n$ generate the Bethe algebra
of the $n$-fold tensor power of the vector representation of $\gln$.
Surprisingly the formula for the generators of the Bethe algebra in terms
of the Gaudin Hamiltonians does not depend on $N$. Moreover, this formula
coincides with Wilson's formula for the stationary Baker-Akhiezer function
on the adelic Grassmannian.
\end{abstract}

\section{Introduction}
The Gaudin model describes a completely integrable quantum spin chain
\cite{G1}, \cite{G2}. We consider the Gaudin model associated with
the Lie algebra $\gln$. Denote by
$L_{\bs\la}$ the irreducible finite-dimensional $\gln$-module with
highest weight $\bs\la$. Consider a tensor product
$\otimes_{a=1}^nL_{\bs\la^{(a)}}$ of such modules and two sequences
of complex numbers: \,$\kk_1\lc\kk_N$ and $z_1\lc z_n$.
Assume that the numbers $z_1\lc z_n$ are distinct.
The Hamiltonians of the Gaudin model are mutually commuting operators
$H_1\lc H_n$, acting on the space $\otimes_{a=1}^nL_{\bs\la^{(a)}}$,
\vvn.2>
\beq
\label{Ha}
H_a\ =\ \sum_{i=1}^N\kk_i\>e_{ii}^{(a)}\,+\>
\sum_{i,j=1}^N\,\sum_{b\neq a} \,\frac{e_{ij}^{(a)}e_{ji}^{(b)}}{z_a-z_b}\;,
\eeq
where \>$e_{ij}$ are the standard generators of $\gln$ and \>$e_{ij}^{(a)}$
is the image of $1^{\otimes(a-1)}\otimes e_{ij}\otimes1^{\otimes(n-a)}$.

One of the main problems in the Gaudin model is to find eigenvalues and
joint eigenvectors of the operators $H_1\lc H_n$, see \cite{B}, \cite{RV},
\cite{MTV1}. The Gaudin Hamiltonians appear also as the right-hand sides
of the Knizhnik-Zamolodchikov equations, see~\cite{SV}, \cite{RV}, \cite{FFR},
\cite{FMTV}.

It was realized long time ago that there are additional interesting
operators commuting with the operators $H_1\lc H_n$, see for example~\cite{KS},
\cite{FFR}. Those operators are called the higher Gaudin Hamiltonians.
To distinguish the operators $H_1\lc H_n$, we will call them the classical
Gaudin Hamiltonians.

The algebra generated by all of the classical and higher Gaudin Hamiltonians is
called the Bethe algebra. A useful formula for generators of the Bethe algebra
was suggested in \cite{T}, see also \cite{MTV1}, \cite{CT}.

In general, the Bethe algebra is larger than its subalgebra generated by
the classical Gaudin Hamiltonians. Nevertheless, we show in this paper that
if all factors of the tensor product $\otimes_{a=1}^nL_{\bs\la^{(a)}}$ are
the standard vector representations of $\gln$, then the classical Gaudin
Hamiltonians generate the entire Bethe algebra. It is a surprising fact since
every tensor product of polynomial $\gln$-modules is a submodule of a tensor
power of the vector representation,
and one may expect that the Bethe algebra of a tensor power of the vector
representation is as general as the Bethe algebra of a tensor product of
arbitrary representations. Another surprising fact is that our formula for
the elements of the Bethe algebra in terms of the classical Gaudin Hamiltonians
does not depend on $N$, see Theorem~\ref{generate}. The third surprise is
that our formula is nothing else but Wilson's formula for the stationary
Baker-Akhiezer function on the adelic Grassmannian \cite{Wi}.

Our theorem can be used to study the higher Gaudin Hamiltonians as functions
of the classical Hamiltonians (or as limits of functions of the classical
Gaudin Hamiltonians). It is known much more about the classical Hamiltonians
than about the higher Hamiltonians.

Our proof of Theorem~\ref{generate} is not elementary. We use the fact that
the Bethe algebra is preserved under the $(\gln,\frak{gl}_n)$-duality and the
completeness of the Bethe ansatz for a tensor product of vector representations
and generic $\kk_1\lc\kk_N$, $z_1\lc z_n$.

\section{Bethe algebra}
\label{alg sec}
\subsection{Lie algebras $\gln$ and $\glnt$}
Let $e_{ij}$, $i,j=1\lc N$, be the standard generators of the Lie algebra
$\gln$ satisfying the relations
$[e_{ij},e_{sk}]=\dl_{js}e_{ik}-\dl_{ik}e_{sj}$. Let $\h\subset\gln$ be
the Cartan subalgebra generated by $e_{ii}, \,i=1\lc N$.

We denote by $V=\oplus_{i=1}^N\C v_i$ the standard $N$-dimensional vector
representation of $\gln$: \,$e_{ij}v_j=v_i$ \,and \,$e_{ij}v_k=0$ \>for
\>$j\ne k$.

Let $M$ be a $\gln$-module.
A vector $v\in M$ is called {\it singular\/} if $e_{ij}v=0$ for
$1\le i<j\le N$. We denote by $M^\sing$ the subspace of all singular vectors
in $M$.

Let $\glnt=\gln\otimes\C[t]$ be the complex Lie algebra of $\gln$-valued
polynomials with the pointwise commutator. For $g\in\gln$, we set
$g(u)=\sum_{s=0}^\infty (g\otimes t^s)u^{-s-1}$.

We identify $\gln$ with the subalgebra $\gln\otimes1$ of constant polynomials
in $\glnt$. Hence, any $\glnt$-module has a canonical structure of
a $\gln$-module.

For each $a\in\C$, there exists an automorphism $\rho_a$ of $\glnt$,
\;$\rho_a:g(u)\mapsto g(u-a)$. Given a $\glnt$-module $M$, we denote by $M(a)$
the pull-back of $M$ through the automorphism $\rho_a$. As $\gln$-modules,
$M$ and $M(a)$ are isomorphic by the identity map.

We have the evaluation homomorphism,
${\glnt\to\gln}$, \;${g(u) \mapsto g\>u^{-1}}$.
Its restriction to the subalgebra $\gln\subset\glnt$ is the identity map.
For any $\gln$-module $M$, we denote by the same letter the $\glnt$-module,
obtained by pulling $M$ back through the evaluation homomorphism.

\subsection{Bethe algebra}
\label{secbethe}
Given an ${N\times N}$-matrix $A$ with possibly noncommuting entries $a_{ij}$,
we define its {\it row determinant\/} to be
\vvn.3>
\be
\rdet A\,=
\sum_{\;\si\in S_N\!} (-1)^\si\,a_{1\si(1)}a_{2\si(2)}\ldots a_{N\si(N)}\,.
\vv-.1>
\ee

Let $\kk_1\lc\kk_N$ be a sequence of complex numbers.
Let $\der_u$ be the operator of differentiation in the variable $u$.
Define the {\it universal differential operator\/} $\D^\kk$
\vvn.4>
by the formula
\vvgood
\beq
\label{DK}
\D^\kk=\,\rdet\left( \begin{matrix}
\der_u-\kk_1-e_{11}(u) & -\>e_{21}(u)& \dots & -\>e_{N1}(u)\\[3pt]
-\>e_{12}(u) &\der_u-\kk_2-e_{22}(u)& \dots & -\>e_{N2}(u)\\[1pt]
\dots & \dots &\dots &\dots \\[1pt]
-\>e_{1N}(u) & -\>e_{2N}(u)& \dots & \der_u-\kk_N-e_{NN}(u)
\end{matrix}\right).
\vv.2>
\eeq
It is a differential operator in $u$, whose coefficients are
formal power series in $u^{-1}$ with coefficients in $\Uglnt$,
\vvn-.1>
\be
\D^\kk=\,\der_u^N+\sum_{i=1}^N\,B_i^\kk(u)\,\der_u^{N-i}\>,
\qquad
B_i^\kk(u)\,=\,\sum_{j=0}^\infty B_{ij}^\kk\>u^{-j}\,,
\ee
and $B_{ij}^\kk\in\Uglnt$, \,$i=1\lc N$, \,$j\in\Z_{\ge 0}\>$. We have
\beq
\label{Bi0}
\der_u^N\>+\>\sum_{i=1}^N B_{i0}^\kk\,\der_u^{N-i}\>=
\,\prod_{i=1}^N\,(\der_u-K_i)\,.
\vv.2>
\eeq
The unital subalgebra of $\Uglnt$ generated by $B_{ij}^\kk$, \,$i=1\lc N$,
\,$j\in\Z_{>0}\>$, is called the {\it Bethe algebra\/} and denoted by $\B^\kk$.

\sskip
By \cite{T}, \cite{MTV1}, \cite{CT}, the algebra $\B^\kk$ is commutative,
and $\B^\kk$ commutes with the subalgebra $U(\h)\subset \Uglnt$.
If all $K_1\lc K_N$ coincide, then $\B^\kk$ commutes with the subalgebra
$\Ugln\subset\Uglnt$.

\sskip
As a subalgebra of $\Uglnt$, the algebra $\B^\kk$ acts on any $\glnt$-module
$M$. Since $\B^\kk$ commutes with $U(\h)$, it preserves the weight subspaces
of $M$. If all $K_1\lc K_N$ coincide, then $\B^\kk$ preserves the subspace
$M^{\sing}$ of singular vectors.

If $L$ is a $\B^\kk$-module, then the image of $\B^\kk$ in $\End(L)$ is called
the {\it Bethe algebra of\/} $L$.

For our purpose it is convenient to consider another set of generators of
the Bethe algebra $\B^\kk$ defined as follows. Let $x$ be a new variable and
\vvn.4>
\beq
\label{PsiK}
\varPsi^\kk(u,x)\,=\,
\Bigl(\>x^N+\sum_{i=1}^N\,B_i^\kk(u)\,x^{N-i}\,\Bigr)
\prod_{i=1}^N\,\frac1{x-K_i}\;=\,
1+\sum_{i=1}^\infty\,\varPsi_i^\kk(u)\,x^{-i}\,.
\vv.1>
\eeq
The series $\varPsi_i^\kk(u)$, \,$i\in\Z_{>0}$, are linear combinations of
the series $B_i^\kk(u)$, \,$i=1\lc N$, and vice versa. Write
\vvn-.3>
\beq
\label{PsiiK}
\varPsi_i^\kk(u)\,=\,\sum_{j=1}^\infty\,\varPsi_{ij}^\kk\,u^{-j}\,.
\vv-.1>
\eeq
Then $\varPsi_{ij}^\kk$, \,$i,j\in\Z_{>0}$, is a new set of generators
of the Bethe algebra $\B^\kk$.

\section{Classical Gaudin Hamiltonians on $\otimes_{a=1}^nV(z_a)$}
\label{main}
Recall that $V$ is the vector representation of the Lie algebra $\gln$.
Consider the tensor product $\otimes_{a=1}^nV(z_a)$ of evaluation
$\glnt$-modules. The series $e_{ij}(u)$ acts on $\otimes_{a=1}^nV(z_a)$ as
$\sum_{a=1}^ne_{ij}^{(a)}(u-z_a)^{-1}$, where $e_{ij}^{(a)}$ is the image of
$1^{\otimes(a-1)}\otimes e_{ij}\otimes1^{\otimes(n-a)}\in
\bigl(U(\gln)\bigr)^{\otimes n}$.

\goodbreak
We denote by $B_{ij}\,,\varPsi_{ij}\in\End(V^{\otimes n})$ the images of the
elements $B_{ij}^\kk\,,\varPsi_{ij}^\kk\in\Uglnt$\,. \,Set
\begin{alignat}2
B_i(u)\,&{}=\,\sum_{j=0}^\infty\,B_{ij}\>u^{-j}\,,\qquad
&&\D\,=\,\der_u^N+\sum_{i=1}^N\,B_i(u)\,\der_u^{N-i}\,,
\notag
\\[4pt]
\label{Psi}
\varPsi_i(u)\,&{}=\,\sum_{j=1}^\infty\,\varPsi_{ij}\,u^{-j}\,,
&&\varPsi(u,x)\,=\,1+\sum_{i=1}^\infty\,\varPsi_i(u)\,x^{-i}\,.
\end{alignat}

All of the series $B_i(u)$, $\varPsi_i(u)$ sum up to rational functions of $u$
with values in $\End(V^{\otimes n})$.
Set in addition
\vvn-.1>
\be
\varPsi_\sdag(x)=-\sum_{i=1}^\infty\varPsi_{i1}x^{-i}\,.
\vv-.4>
\ee

\begin{lem}
We have
\beq
\label{Psi12}
\varPsi_1(u)\,=\,-\,\sum_{a=1}^n\,\frac1{u-z_a}\;,\qquad
\varPsi_2(u)\,=\,\sum_{a=1}^n\,\frac1{u-z_a}\,\Bigl(
-\>H_a\>+\>\sum_{b\ne a} \frac1{z_a-z_b}\,\Bigr)\,,
\vv-.2>
\eeq
where
\vvn-.4>
\beq
\label{H}
H_a\,=\,\sum_{i=1}^N\kk_i\>e_{ii}^{(a)}\,+\>
\sum_{i,j=1}^N\,\sum_{b\ne a} \,\frac{e_{ij}^{(a)}e_{ji}^{(b)}}{z_a-z_b}
\vv.3>
\eeq
are the classical Gaudin Hamiltonians~\Ref{Ha}, and
\vvn.1>
\be
\varPsi_\sdag(x)\,=\,\sum_{i=1}^N\,\sum_{a=1}^n\,\frac{e_{ii}^{(a)}}{x-K_i}\ .
\ee
\end{lem}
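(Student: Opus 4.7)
The plan is to extract $\varPsi_1^\kk(u), \varPsi_2^\kk(u)$ and $\varPsi_\sdag(x)$ from the universal differential operator $\D^\kk$ via the recursion $\varPsi_k^\kk = B_k^\kk(u) - B_{k0}^\kk - \sum_{i=1}^{k-1}\varPsi_i^\kk(u) B_{k-i,0}^\kk$ obtained by multiplying \Ref{PsiK} through by $\prod_i(x-\kk_i)$ and comparing coefficients of $x^{N-k}$, then taking the image in $\End(V^{\otimes n})$. For $\varPsi_1$, only the identity permutation in the row determinant $\D^\kk$ has enough $\der_u$ entries to contribute to the coefficient of $\der_u^{N-1}$; expanding $\prod_i(\der_u-\kk_i-e_{ii}(u))$ gives $B_1^\kk(u) = -\sum_i(\kk_i+e_{ii}(u))$ and hence $\varPsi_1^\kk(u) = -\sum_i e_{ii}(u)$. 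Since $\sum_i e_{ii}^{(a)}$ acts as the identity on $V^{\otimes n}$, the image is $-\sum_a 1/(u-z_a)$.

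For $\varPsi_2$, the coefficient $B_2^\kk(u)$ of $\der_u^{N-2}$ receives contributions only from the identity permutation and from transpositions (no permutation with three or more non-fixed points has enough $\der_u$ factors). Careful normal-ordering of $\prod_i(\der_u-\kk_i-e_{ii}(u))$ yields the identity contribution $\sum_{i<j}(\kk_i+e_{ii}(u))(\kk_j+e_{jj}(u))-\sum_i(i-1)e_{ii}'(u)$, the derivative term arising from pulling a single $e_{ii}(u)$ past $i-1$ copies of $\der_u$; the transposition $(jk)$, $j<k$, contributes $-e_{kj}(u)e_{jk}(u)$. The recursion then yields
\be
\varPsi_2^\kk(u) = -\sum_i\kk_i e_{ii}(u) + \sum_{i<j}e_{ii}(u)e_{jj}(u) - \sum_i(i-1)e_{ii}'(u) - \sum_{j<k}e_{kj}(u)e_{jk}(u).
\ee
I would then verify the asserted formula in $\End(V^{\otimes n})$ by matching poles. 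The would-be double pole at $u=z_a$ from $-\sum_i(i-1)e_{ii}'(u)$ is $\sum_k(k-1)e_{kk}^{(a)}/(u-z_a)^2$, while the double pole from $-\sum_{j<k}e_{kj}(u)e_{jk}(u)$ is $-\sum_k(k-1)e_{kk}^{(a)}/(u-z_a)^2$ (using $E_{kj}E_{jk}=E_{kk}$ on $V$ for $j<k$), so the double poles cancel. For simple poles, $\sum_i e_{ii}^{(a)} = 1$ and the permutation identity $\sum_{i,j}e_{ij}^{(a)}e_{ji}^{(b)} = P^{(ab)}$ reduce the residue at $u=z_a$ to $-\sum_i\kk_i e_{ii}^{(a)} + \sum_{b\ne a}(1-P^{(ab)})/(z_a-z_b)$, which by \Ref{H} equals $-H_a + \sum_{b\ne a} 1/(z_a - z_b)$. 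Both sides vanish at $u = \infty$, so they agree.

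For $\varPsi_\sdag$, the definition $\varPsi_\sdag(x) = -\sum_i\varPsi_{i1}x^{-i}$ is equivalent to $\varPsi_\sdag(x) = -\lim_{u\to\infty}u(\varPsi^\kk(u,x)-1)$, so only the first-order-in-$u^{-1}$ expansion of $\D^\kk$ is required. Writing the defining matrix as $M_0 - u^{-1}E + O(u^{-2})$ with $M_0 = \diag(\der_u-\kk_i)$ and $E_{ij} = e_{ji}$, multilinearity of the row determinant in rows (together with the observation that the other diagonal rows force the permutation to be the identity in each correction term) gives $\D^\kk = \prod_i(\der_u-\kk_i) - u^{-1}\sum_r e_{rr}\prod_{i\ne r}(\der_u-\kk_i) + O(u^{-2})$. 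Since $e_{rr}$ commutes with $\der_u$, substituting $\der_u \to x$ and dividing by $\prod_i(x-\kk_i)$ gives $\varPsi^\kk(u,x) = 1 - u^{-1}\sum_r e_{rr}/(x-\kk_r) + O(u^{-2})$; thus $\varPsi_\sdag(x) = \sum_r e_{rr}/(x-\kk_r)$, whose image in $\End(V^{\otimes n})$ is the asserted formula.

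The main obstacle is the asymmetric derivative term $-\sum_i(i-1)e_{ii}'(u)$ in $B_2^\kk(u)$, which must be tracked carefully through normal-ordering, and the recognition that it cancels the double-pole part of the transposition sum after passage to $\End(V^{\otimes n})$; once that is noticed, the residue computation is driven by the standard permutation identity on $V \otimes V$.
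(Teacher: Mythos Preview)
Your proof is correct and carries out in full the direct computation that the paper simply declares ``straightforward'' (with a pointer to formula~(8.5) and Appendix~B of \cite{MTV1}); the approach is the same. One small omission: when checking that $\varPsi_2(u)$ has no double pole at $u=z_a$ you should also note that the term $\sum_{i<j}e_{ii}(u)e_{jj}(u)$ contributes $\sum_{i<j}e_{ii}^{(a)}e_{jj}^{(a)}/(u-z_a)^2$, which vanishes on $V$ since $e_{ii}^{(a)}e_{jj}^{(a)}=0$ for $i\ne j$ --- with that added, the double-pole cancellation is complete.
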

\begin{proof}
The claim is straightforward. See also formula~(8.5) and Appendix~B in
\cite{MTV1}.
\end{proof}

To formulate our main result we introduce a diagonal matrix
\vvn.2>
\beq
\label{Z}
Z\,=\,\diag(z_1\lc z_n)
\eeq
and a matrix
\vvn-.6>
\beq
\label{Q}
Q = \left(\,
\begin{matrix}
\yh_1 & \dfrac{1}{z_2-z_1} & \dfrac{1}{z_3-z_1} &\dots & \dfrac{1}{z_n-z_1}
\\[14pt]
\dfrac{1}{z_1-z_2} & \yh_2 & \dfrac{1}{z_3-z_2} &{} \dots & \dfrac{1}{z_n-z_2}
\\[9pt]
{}\dots & {}\dots & {} \dots & \dots & \dots
\\[6pt]
\dfrac{1}{z_1-z_n} & \dfrac{1}{z_2-z_n}& \dfrac{1}{z_3-z_n}
&{} \dots & \yh_n
\end{matrix}\,\right)
\vv.4>
\eeq
depending on new variables $\yh_1\lc\yh_n$. Set
\vvn.3>
\beq
\label{psi}
\psi(u,x,z_1\lc z_n,\yh_1\lc\yh_n)\,=\,
\det\bigl(1-(u-Z)^{-1}\>(x-Q)^{-1}\bigr)\,,
\vv-.2>
\eeq
\be
\phi(x,z_1\lc z_n,\yh_1\lc\yh_n)\,=\,\det(x-Q)\,,\quad
\psi_\sdag(x,z_1\lc z_n,\yh_1\lc\yh_n)\,=\,\tr\bigl((x-Q)^{-1}\bigr)\,.
\vv.2>
\ee

\begin{thm}
\label{generate}
The Bethe algebra of $\otimes_{a=1}^nV(z_a)$ is generated by the classical
Gaudin Hamiltonians $H_1\lc H_n$. More precisely,
\vvn.3>
\be
\varPsi(u,x)\,=\,\psi(u,x,z_1\lc z_n,H_1\lc H_n)\,.
\vvgood
\vv-.2>
\ee
In particular,
\vvn-.6>
\beq
\label{eii}
\psi_\sdag(x,z_1\lc z_n,H_1\lc H_n)\,=\,
\sum_{i=1}^N\,\sum_{a=1}^n\,\frac{e_{ii}^{(a)}}{x-K_i}\ .
\eeq
\end{thm}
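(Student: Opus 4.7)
The plan is to prove the explicit identity
$$\varPsi(u,x)\,=\,\psi(u,x,z_1,\ldots,z_n,H_1,\ldots,H_n),$$
from which the generation statement is immediate: the coefficients of $\varPsi(u,x)$ generate $\B^{\kk}$ by construction, and here they are exhibited as polynomials in $H_1,\ldots,H_n$. Since both sides are rational in the parameters $\kk_i$ and $z_a$ with values in $\End(V^{\otimes n})$, it suffices to establish the identity on a Zariski-dense set of parameters, after which rationality takes care of the rest.

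First I would fix generic $\kk$ and $z$ so that the Bethe ansatz is complete on $V^{\otimes n}$ and the operators $H_1,\ldots,H_n$ have simple joint spectrum; both properties are available from \cite{MTV1}. Under simple spectrum it is enough to verify the identity on each joint eigenvector $v$ of the Gaudin Hamiltonians. If $H_a v = h_a(v)\,v$, then the right-hand side acts on $v$ by the scalar rational function $\det\bigl(1-(u-Z)^{-1}(x-Q_v)^{-1}\bigr)$, where $Q_v$ is the numerical matrix obtained from $Q$ by substituting $\yh_a\mapsto h_a(v)$. The theorem therefore reduces to showing that the eigenvalue of $\varPsi(u,x)$ on $v$ equals this determinant.

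To compute that eigenvalue I would invoke the $(\gln,\gl_n)$-duality of \cite{MTV1}. The duality identifies the image of $\B^{\kk}$ acting on a weight subspace of $V^{\otimes n}$ with the image of a dual $\gl_n[t]$-Bethe algebra on the same space, after exchanging $\kk$ with $z$ and swapping the spectral variables $u$ and $x$. Under this correspondence the classical $\gln$-Gaudin Hamiltonians $H_1,\ldots,H_n$ are sent precisely to the diagonal generators that appear in the matrix $Q$, while $x-Q$ plays the role of the dual Lax-type matrix whose characteristic polynomial recovers the dual universal differential operator. A linear-algebra manipulation of the ratio defining $\varPsi^{\kk}(u,x)$ on the dual side then brings the eigenvalue into the closed form $\det\bigl(1-(u-Z)^{-1}(x-Q_v)^{-1}\bigr)$; this last reformulation is exactly Wilson's formula for the stationary Baker-Akhiezer function on the adelic Grassmannian associated with the pair $(Z,Q_v)$.

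The main obstacle is precisely this last identification. The $(\gln,\gl_n)$-duality delivers an abstract isomorphism between the two Bethe algebras, but matching the concrete expression $\det(1-(u-Z)^{-1}(x-Q)^{-1})$ on the nose — and in particular verifying that the diagonal entries of $Q$ are the classical Hamiltonians themselves rather than some shifted or normalized version of them — requires the explicit dual Lax-matrix computation together with Wilson's identity. Once that combinatorial bridge between the duality and Wilson's formula is in place, the eigenvalue match holds on every Bethe vector, the identity of operators follows for generic parameters, and rationality in $\kk,z$ extends it to all admissible values.
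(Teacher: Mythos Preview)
Your architecture matches the paper's: verify the operator identity on a joint eigenbasis for generic $(\kk,z)$, then pass to all parameters by meromorphicity. But the heart of the argument---computing the eigenvalue of $\varPsi(u,x)$ on a single eigenvector and recognizing it as the determinant---is handled differently, and in your version it remains a gap.

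The paper does not use the $(\gln,\gl_n)$-duality as an abstract isomorphism of Bethe algebras. Instead it exploits one concrete consequence: for an eigenvector $v$, after the bispectral swap $u\mapsto\der_x$, $\der_u\mapsto x$ in $\D_{\reg}$, the resulting scalar operator $D_v$ has kernel spanned by quasi-exponentials $(x+\mu_a)e^{z_ax}$, $a=1,\ldots,n$ (this is imported from \cite{MTV2}, \cite{MTV4}). The bridge to the determinant is then a purely elementary Wronskian identity (Lemma~\ref{WW}): with $h_a=-\mu_a-\sum_{b\ne a}(z_a-z_b)^{-1}$,
\[
e^{-ux-\sum z_ax}\,\Wr\bigl[(x+\mu_1)e^{z_1x},\ldots,(x+\mu_n)e^{z_nx},e^{ux}\bigr]\,=\,\Dl\cdot\det\bigl((u-Z)(x-Q)-1\bigr).
\]
This yields $\varPsi_{ij}v=\psi_{ij}v$. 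Finally, the identification of the $h_a$ with the eigenvalues of the Gaudin Hamiltonians $H_a$ is read off simply by comparing the explicit formulae for $\varPsi_2(u)$ and $\psi_2(u)$ (formulae~\Ref{Psi12} and~\Ref{psi12}); no further duality argument is needed.

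What you call ``the main obstacle'' is exactly this pair of steps: the Wronskian identity and the $\psi_2$-matching. You are right that it is the crux, but your proposal does not supply it---invoking an abstract dual Lax matrix and ``Wilson's identity'' does not by itself tell you that the diagonal of $Q$ is literally $H_a$ rather than a shift of it, nor does it produce the determinant on the nose. The paper's route through the quasi-exponential kernel and the explicit Wronskian computation is what closes that gap.
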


\vsk.2>
\begin{rem}
Since \;$\tr\bigl((x-Q)^{-1}\bigr)\>=\>\der_x\log\bigl(\det(x-Q)\bigr)$,
\,formula~\Ref{eii} can be written as
\be
\phi(x,z_1\lc z_n,H_1\lc H_n)\,=\,
\prod_{i=1}^N\,(x-K_i)^{\sum_{a=1}^ne_{ii}^{(a)}}\,.
\vv.2>
\ee
\end{rem}

\begin{rem}
The matrix \,$[Q,Z]+1$ \,has rank one.
For every distinct $z_1\lc z_n$ and every $h_1\lc h_n$,
the pair \;$(Q\>,Z)$ \,defines a point of the $n$-th Calogero-Moser space,
hence, a point of the adelic Grassmannian. The function
$e^{ux}\>\psi(u,x,z_1\lc z_n,\yh_1\lc\yh_n)$ is the stationary Baker-Akhiezer
function of that point, see Section~3 in~\cite{Wi}.
Theorem \ref{generate} says that the coefficients
$\psi_{ij}(z_1\lc z_n,H_1\lc H_n)$
of the stationary Baker-Akhiezer function,
\vvn-.2>
\be
e^{ux}\psi(u,x,z_1\lc z_n,H_1\lc H_n)\,=\,e^{ux}\>\Bigl(\>1\>+
\sum_{i,j=1}^\infty\,\psi_{ij}(z_1\lc z_n,H_1\lc H_n)\,u^{-j}\>x^{-i}\,\Bigr)
\vv-.2>
\ee
generate the Bethe algebra of $\otimes_{a=1}^nV(z_a)$.
More remarks on this subject see in Section~\ref{CM}.
\end{rem}

\begin{cor}
\label{cor spec}
For distinct real $\kk_1\lc\kk_N$, and distinct real $z_1\lc z_n$, the joint
spectrum of the classical Gaudin Hamiltonians $H_1\lc H_n$ acting on
$\otimes_{a=1}^nV(z_a)$ is simple. That is, the classical Gaudin Hamiltonians
have a joint eigenbasis, and for any two vectors of the eigenbasis at least
one of the classical Gaudin Hamiltonians has different eigenvalues for those
vectors.
\end{cor}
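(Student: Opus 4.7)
The plan is to derive simplicity of the joint spectrum of $H_1,\ldots,H_n$ from Theorem~\ref{generate} together with self-adjointness at real parameters and a known reality theorem for the Bethe algebra.

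First, I would set up self-adjointness. Equip $V=\C^N$ with the standard Hermitian form making $v_1,\ldots,v_N$ orthonormal, and $V^{\otimes n}$ with the tensor product form. Then $(e_{ij})^*=e_{ji}$ on $V$ and $(e_{ij}^{(a)})^*=e_{ji}^{(a)}$ on $V^{\otimes n}$. When $K_1,\ldots,K_N$ and $z_1,\ldots,z_n$ are real, formula~\eqref{H} makes each $H_a$ a Hermitian operator. Since the $H_a$ commute, they are simultaneously diagonalizable in an orthonormal joint eigenbasis of $V^{\otimes n}$, and in particular their joint eigenvalues lie in $\R^n$.

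Next, I would apply Theorem~\ref{generate} to reduce the simplicity question for $(H_1,\ldots,H_n)$ to the same question for the full Bethe algebra. By the theorem, every coefficient of $\varPsi(u,x)$ is an explicit polynomial in $H_1,\ldots,H_n$, hence the image of $\B^\kk$ in $\End(V^{\otimes n})$ is exactly the subalgebra generated by $H_1,\ldots,H_n$. Therefore two vectors share all eigenvalues under $\B^\kk$ if and only if they share the joint eigenvalue $(h_1,\ldots,h_n)$ under $(H_1,\ldots,H_n)$, and simplicity of the joint $H$-spectrum is equivalent to simplicity of the joint $\B^\kk$-spectrum on $V^{\otimes n}$, equivalently to the image of $\B^\kk$ having full dimension $N^n$.

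For the last step I would invoke the Mukhin--Tarasov--Varchenko reality theorem for the $\gln$ Gaudin model: for all distinct real $K_i$ and $z_a$, the Bethe ansatz on the tensor power of the vector representation produces an orthogonal (with respect to the Hermitian form above) eigenbasis of $V^{\otimes n}$ consisting of Bethe vectors with pairwise distinct joint Bethe-algebra eigenvalues. This is exactly the statement that the image of $\B^\kk$ in $\End(V^{\otimes n})$ has dimension $N^n$, and, combined with the previous paragraph, it yields the corollary.

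The main obstacle is of course this last invocation: passing from \emph{generic} parameters (where simplicity of the $\B^\kk$-spectrum is a direct consequence of the count and linear independence of Bethe vectors used already in the proof of Theorem~\ref{generate}) to \emph{all} distinct real parameters is nontrivial, because the dimension of the image of a commutative subalgebra is only lower semi-continuous in the parameters and, a priori, Bethe vectors could collide at special real configurations. The needed non-collision at real points is precisely the content of the real Schubert calculus / Shapiro-type results used in this setup, and this reality input is the genuine non-formal ingredient of the proof.
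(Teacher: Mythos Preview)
Your proposal is correct and follows essentially the same route as the paper: invoke Theorem~\ref{generate} to identify the image of $\B^\kk$ with the algebra generated by $H_1,\ldots,H_n$, and then cite the reality result (the paper's reference~\cite{MTV5}) that the Bethe algebra of $\otimes_{a=1}^n V(z_a)$ has simple spectrum for distinct real $K_i$ and $z_a$. Your self-adjointness discussion is a harmless addition but not needed, since the cited simplicity of the Bethe-algebra spectrum already gives diagonalizability.
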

\begin{proof}
By \cite{MTV5}, for distinct real $\kk_1\lc\kk_N$, and distinct real
$z_1\lc z_n$, the Bethe algebra of $\otimes_{a=1}^nV(z_a)$ has simple
spectrum. Therefore, the classical Gaudin Hamiltonians have simple spectrum
by Theorem~\ref{generate}.
\end{proof}

\begin{cor}
\label{cor spec0}
If $K_1\lc K_N$ coincide, and $z_1\lc z_n$ are distinct and real, then
the joint spectrum of the classical Gaudin Hamiltonians $H_1\lc H_n$
acting on $(\otimes_{a=1}^nV(z_a))^{\sing}$ is simple.
\end{cor}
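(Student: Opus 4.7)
The plan is to mirror Corollary~\ref{cor spec}, with the full tensor product replaced by its singular subspace.

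The first step is structural. When $K_1 = \cdots = K_N$ the Bethe algebra $\B^\kk$ commutes with $\Ugln \subset \Uglnt$, as noted at the end of Section~\ref{secbethe}, so its image in $\End(V^{\otimes n})$ preserves the singular subspace $(\otimes_{a=1}^n V(z_a))^{\sing}$. The classical Gaudin Hamiltonians $H_1, \ldots, H_n$ are $\gln$-invariant in this case, and in particular preserve this subspace. By Theorem~\ref{generate}, every generator of the Bethe algebra on $V^{\otimes n}$ is a polynomial in $H_1, \ldots, H_n$ whose coefficients are rational functions of $z_1, \ldots, z_n$ independent of the representation; restricting this identity to the singular subspace shows that the Bethe algebra of $(\otimes_{a=1}^n V(z_a))^{\sing}$ is generated by the restrictions of $H_1, \ldots, H_n$.

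The second step is to invoke the simple spectrum statement of \cite{MTV5} for the Bethe algebra acting on $(\otimes_{a=1}^n V(z_a))^{\sing}$ under the hypotheses of the corollary (coincident $K_i$'s, distinct real $z_a$'s). Once that input is in hand, the first step forces the restrictions of $H_1, \ldots, H_n$ to separate the joint eigenvectors of the Bethe algebra on the singular subspace, which is exactly the statement of the corollary.

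The main obstacle I anticipate is confirming that the simple spectrum result from \cite{MTV5} is available in precisely the form needed here, namely with coincident $K_i$'s and with the action restricted to the singular subspace. If that particular case is not spelled out there, a natural workaround is a deformation argument starting from distinct real tuples $K_1(\ep), \ldots, K_N(\ep)$ converging to the common value: Corollary~\ref{cor spec} gives simple spectrum of the full Gaudin system on $V^{\otimes n}$ for $\ep \neq 0$, and one then passes to the limit on joint eigenvectors, using that the singular subspace does not depend on the $K_i$'s and that in the limit $H_1(0), \ldots, H_n(0)$ commute with $\Ugln$, so the $\gln$-isotypic decomposition is preserved and the eigenvalues needed to separate singular vectors cannot collide in the limit. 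Making this limiting argument rigorous — ensuring that no accidental degeneracy on the singular subspace is created in the limit — would be the delicate point.
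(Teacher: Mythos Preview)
Your first step is exactly the paper's: Theorem~\ref{generate} restricts to the singular subspace and shows that the Bethe algebra there is generated by the restrictions of $H_1,\ldots,H_n$, so simple spectrum for the Bethe algebra implies simple spectrum for the classical Hamiltonians.

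Two points of divergence from the paper in your second step. First, the relevant input is \cite{MTV3}, not \cite{MTV5}: the latter treats distinct real $K_1,\ldots,K_N$ on the full tensor product (as used in Corollary~\ref{cor spec}), while the former gives simple spectrum of the Bethe algebra on $(\otimes_{a=1}^n V(z_a))^{\sing}$ for $K_i=0$ and distinct real $z_a$. Second, the reduction from coinciding nonzero $K_i$'s to the case $K_i=0$ is elementary and does not require any deformation argument. Since $V$ is the vector representation, $\sum_{i=1}^N e_{ii}^{(a)}$ acts as the identity on $V^{\otimes n}$ for every $a$. Hence if $K_1=\cdots=K_N=K$, formula~\Ref{H} gives $H_a = K\cdot 1 + H_a|_{K=0}$, a scalar shift that leaves the joint eigenspace decomposition unchanged. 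Your proposed limit $K_i(\ep)\to K$ from distinct values is unnecessary, and the concern you raise about accidental collisions in the limit is a real one that this direct argument avoids entirely.
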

\begin{proof}
By \cite{MTV3}, if $K_i=0$ for all $i=1\lc N$, and $z_1\lc z_n$ are real and
distinct, then the Bethe algebra of $(\otimes_{a=1}^nV(z_a))^{\sing}$ has
simple spectrum. Therefore, the classical Gaudin Hamiltonians acting
on $(\otimes_{a=1}^nV(z_a))^{\sing}$ have simple spectrum by
Theorem~\ref{generate}. The case of nonzero coinciding $K_1\lc K_N$ follows
from the case of zero $K_1\lc K_N$, since $\sum_{i=1}^N e_{ii}^{(a)}=1$ for
all $a=1\lc n$, see~\Ref{H}.
\end{proof}

\section{Proof of Theorem~\ref{generate}}
\subsection{Preliminary lemmas}
For functions $f_1(x)\lc f_m(x)$ of one variable, denote by
\vvn.2>
\be
\Wr[f_1\lc f_m]\,=\,\det\left( \begin{matrix}
f_1 & f_1'& \dots & f_1^{(m-1)}\;\\
f_2 & f_2'& \dots & f_2^{(m-1)}\\
\dots & \dots &\dots &\dots \\
f_m & f_m'& \dots & f_m^{(m-1)}
\end{matrix}\right)
\vv-.4>
\ee
the Wronskian of $f_1(x)\lc f_m(x)$.
\vvgood
\ Set \ $\dsty\Dl\,=\,\prod_{1\le a<b\le n}(z_b-z_a)$\,,\quad
$\dsty P(u)\,=\,\prod_{a=1}^n\,(u-z_a)$\,, \ and
\be
P_a(u)\,=\,\prod_{\fratop{b=1}{b\ne a}}^n\,\frac{u-z_b}{z_a-z_b}\,,\qquad
a=1\lc n\,.
\ee
Let $f_a(x) = (x+\mu_a)\>e^{z_ax}$, \,$a=1\lc n$, \,where \,$\mu_1\lc\mu_n$
are new variables. Set
\vvn-.1>
\be
W(u,x)\,=\,e^{-ux-\sum_{a=1}^n z_ax}\,
\Wr\bigl[f_1(x)\lc f_n(x), e^{ux}\bigr]\,=\,
W_0(x)\>\Bigl(u^n+\>\sum_{a=1}^n\,C_a(x)\>u^{n-a}\>\Bigr)\,.
\vv-.3>
\ee
Clearly, \ $W_0(x)\,=\,e^{-\sum_{a=1}^n z_ax}\,\Wr\bigl[f_1(x)\lc f_n(x)]$\,.

\begin{lem}
\label{WW}
Let \ $\dsty \yh_a\,=\,-\>\mu_a-\sum_{b\neq a}\,\frac1{z_a-z_b}$\;,\quad
$a=1\lc n$.\quad Then
\vvn.1>
\beq
\label{Wux}
W(u,x)\,=\,\Dl\cdot\det\bigl((u-Z)\>(x-Q)-1\bigr)\,,
\vv.2>
\eeq
where the matrices $Z$ and $Q$ are given by~\Ref{Z} and~\Ref{Q}.
In particular,
\vvn.3>
\beq
\label{W0}
W_0(x)\,=\,\Dl\cdot\det(x-Q)\,.
\vv.2>
\eeq
\end{lem}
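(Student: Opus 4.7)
The plan is to establish both sides of the claimed identity as the same sum over partial pair matchings of $[n]$.

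First, direct differentiation gives $f_a^{(k)}(x)=e^{z_a x}[(x+\mu_a)z_a^k+k z_a^{k-1}]$ and $(e^{ux})^{(k)}=u^k e^{ux}$. Factoring $e^{z_a x}$ from row $a$ and $e^{ux}$ from the last row of the Wronskian, one obtains $W(u,x)=\det M$ for the $(n+1)\times(n+1)$ matrix $M$ with $M_{a,k}=(x+\mu_a)z_a^k+k z_a^{k-1}$ for $a\le n$ and $M_{n+1,k}=u^k$. Since $k w_a^{k-1}=\partial_{w_a}w_a^k$, this matrix arises by applying the operator $D_a:=\partial_{w_a}+(x+\mu_a)$ to the $a$-th row of the $(n+1)\times(n+1)$ Vandermonde determinant $V(w,u):=\Dl(w)\prod_c(u-w_c)$ and then setting $w_a=z_a$. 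Thus the identity to prove reduces to
\be
\prod_{a=1}^n D_a\,V(w,u)\Bigr|_{w=z}\,=\,\Dl(z)\,\det\bigl((u-Z)(x-Q)-1\bigr).
\ee

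Second, the conjugation identity $D_b(V\cdot G)=V\cdot(\tilde L_b+\partial_{w_b})G$, with $\tilde L_b:=(x+\mu_b)+\partial_{w_b}\log V$, yields $\prod_a D_a V=V\cdot\prod_a(\tilde L_a+\partial_{w_a})\cdot 1$. A direct computation of $\partial_{w_a}\log V$ shows $\tilde L_a|_{w=z}=(x+\mu_a)+\sum_{b\ne a}(z_a-z_b)^{-1}-(u-z_a)^{-1}=(x-h_a)-(u-z_a)^{-1}$ by the definition of $h_a$ in the lemma. The key combinatorial point is that $\partial_{w_a}\partial_{w_b}\tilde L_c=0$ whenever $a,b,c$ are pairwise distinct, since $\tilde L_c$ depends on $w_a$ and $w_b$ only through the separate simple poles $(w_c-w_a)^{-1}$ and $(w_c-w_b)^{-1}$; moreover, each factor $(\tilde L_a+\partial_{w_a})$ supplies either $\tilde L_a$ or $\partial_{w_a}$, so $\partial_a\tilde L_a$ never arises. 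The Leibniz expansion therefore collapses to
\be
W(u,x)\,=\,\Dl(z)\,P(u)\,\sum_M\,\prod_{\{a,b\}\in M}\frac{1}{(z_a-z_b)^2}\,\prod_{c\notin\bigcup M}\tilde L_c(z)\,,
\ee
the sum running over partial pair matchings $M$ of $[n]$.

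Third, expand $\det((u-Z)(x-Q)-1)$ by the Leibniz rule grouped by cycle type. Diagonal entries $A_{aa}=(u-z_a)\tilde L_a(z)$ give the fixed-point factors, and transposition pairs $A_{ab}A_{ba}=-(u-z_a)(u-z_b)/(z_a-z_b)^2$ combined with the $-1$ from the transposition sign give exactly the pair contributions above. The main obstacle is to show that cycles of length $k\ge 3$ contribute zero; this reduces to the partial-fraction identity
\be
\sum\,\frac{1}{(z_{i_1}-z_{i_2})(z_{i_2}-z_{i_3})\cdots(z_{i_k}-z_{i_1})}\,=\,0\qquad(k\ge 3),
\ee
where the sum runs over the $(k-1)!$ cyclic orderings of a $k$-subset. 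This is proved by induction on $k$: the base case $k=3$ follows because a cycle and its reverse produce opposite signs in the denominator; the inductive step $k\ge 4$ treats the sum as a rational function of $z_{i_1}$, shows that its residues at $z_{i_1}=z_{i_j}$ vanish by the $(k-1)$-case of the hypothesis, and concludes that it is a polynomial vanishing at infinity. The surviving involution terms (pair matchings plus fixed points) then match the expansion from the second paragraph term-by-term, yielding the first claim; the second formula $W_0(x)=\Dl\det(x-Q)$ is obtained as the coefficient of $u^n$.
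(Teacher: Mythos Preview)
Your argument is correct, and it takes a genuinely different route from the paper's. The paper first proves the formula for $W_0(x)$ by a matrix factorization: writing $W_0(x)=\det\bigl(S(x+M)+T\bigr)$ with $S$ the Vandermonde matrix, $M=\diag(\mu_1,\ldots,\mu_n)$, and $T_{ab}=(a-1)z_b^{a-2}$, it factors out $\det S=\Dl$ and identifies $M+S^{-1}T$ with $-Q$ via the Lagrange interpolation basis (the entries of $S^{-1}T$ are $P_a'(z_b)$). The formula for $W(u,x)$ is then obtained by an $(n+1)$-point trick: one adjoins $f_{n+1}(x)=(x+\mu_{n+1})e^{ux}$, applies the $W_0$-result in size $n+1$, and sends $\mu_{n+1}\to\infty$ to recover $e^{ux}$ in the last slot. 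Your approach instead expands both sides combinatorially as the same sum over partial matchings of $[n]$: on the Wronskian side this comes from conjugating the first-order operators $D_a$ through the Vandermonde and observing that mixed second derivatives of the logarithmic factors vanish; on the determinant side it comes from the cycle expansion of $\det\bigl((u-Z)(x-Q)-1\bigr)$ together with the vanishing of the cyclic sums $\sum_{\text{cyclic}}\prod_j(z_{i_j}-z_{i_{j+1}})^{-1}$ for $k\ge3$, so that only involutions survive. The paper's proof is shorter and more structural, exploiting the Vandermonde inverse and a neat limit; yours is longer but gives an explicit term-by-term bijection and isolates the combinatorial reason (only fixed points and transpositions contribute) behind the identity. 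One minor remark: in your residue step the cancellation at $z_{i_1}=z_{i_j}$ is in fact automatic --- the contributions from $i_1\to i_j$ and $i_j\to i_1$ produce the $(k{-}1)$-cycle sum with opposite signs --- so the inductive hypothesis is not strictly needed there, though invoking it does no harm.
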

\begin{proof}
First, we prove formula~\Ref{W0}. Let $S$ and $T$ be \>${n\times n}$ matrices
with entries \>$S_{ab}=z_b^{a-1}$ \>and \>$T_{ab}=(a-1)\>z_b^{a-2}$,
respectively. Clearly, \,$\det S=\Dl$. The entries of the matrix $S^{-1}$ are
determined by the equality \,$P_a(u)=\sum_{b=1}^n(S^{-1})_{ab}\,u^{b-1}$,
\>so that the entries of \,$S^{-1}T$ \,are \,$(S^{-1}T)_{ab}\>=\>P_a'(z_b)$.
\vsk.2>
Let \,$M=\diag(\mu_1\lc\mu_n)$. Since
\,$\der_x^k f_a(x)\>=\>\bigl((x+\mu_a)z_a^k+kz_a^{k-1}\bigr)\>e^{z_ax}$,
we have
\vvn.4>
\be
W_0(x)\,=\,\det\bigl(S\>(x+M)+T\bigr)\,=\,
\det S\cdot\det(x+M+S^{-1}T)\,=\,\Dl\cdot\det\bigl(x-Q)\,.
\ee

\vsk.3>
To prove formula~\Ref{Wux}, set $z_{n+1}=u$. Let \,$\Qh$ be
an \>${(n+1)\>{\times}\>(n+1)}$ \>matrix with entries $\Qh_{ab}=(z_b-z_a)^{-1}$
for $a\ne b$, \>and
\vvn-.6>
\be
\Qh_{aa}\,=\,-\>\mu_a-\sum_{\fratop{b=1}{b\neq a}}^{n+1}\,\frac1{z_a-z_b}\;,
\ee
where \>$\mu_{n+1}$ is a new variable.
\,Set \,$f_{n+1}(x)=(x+\mu_{n+1})\>e^{z_{n+1}x}$.
Similarly to~\Ref{W0}, we have
\vvn.3>
\be
e^{-\sum_{a=1}^{n+1}z_ax}\,\Wr\bigl[f_1(x)\lc f_{n+1}(x)\bigr]\,=\,
\Dl\cdot P(z_{n+1})\,\det(x-\Qh\>)\,.
\vv.3>
\ee
It is easy to see that
\vv.2>
\;$\Wr\bigl[f_1(x)\lc f_n(x), e^{ux}\bigr]\,=\,\lim_{\mu_{n+1}\to\infty}
\bigl(\>\mu_{n+1}^{-1}\Wr\bigl[f_1(x)\lc f_{n+1}(x)\bigr]\bigr)$
\ and
\ $\lim_{\mu_{n+1}\to\infty}\bigl(\>\mu_{n+1}^{-1}
\>\det(x-\Qh\>)\bigr)\>=\>\det\bigl(x-Q-(u-Z)^{-1}\bigr)$\,.
\;Then
\vvn.4>
\be
W(u,x)\,=\,\Dl\cdot P(u)\,\det\bigl(x-Q-(u-Z)^{-1}\bigr)\,=\,
\Dl\cdot\det\bigl((u-Z)\>(x-Q)-1\bigr)\,.
\vvn.2>
\ee
The lemma is proved.
\end{proof}

The complex vector space spanned by the functions $f_1\lc f_n$ is the kernel of
the monic differential operator
\vvn-.6>
\beq
\label{D}
D\,=\,\der_x^n\>+\sum_{a=1}^n\,C_a(x)\>\der_x^{n-a}\,.
\eeq
The function $\psi(u,x)$, defined by~\Ref{psi}, has the following expansion
as $u\to\infty$, \,$x\to\infty$:
\vvn.3>
\beq
\label{psiij}
\psi(u,x)\,=\,1\,+\>
\sum_{i=1}^\infty\,\sum_{j=1}^\infty\,\psi_{ij}\>u^{-j}\>x^{-i}\,.
\eeq
Here we suppressed the arguments \,$z_1\lc z_n$\>, \>$\yh_1\lc\yh_n$.
\,Set \,$\psi_i(u)\>=\sum_{j=1}^\infty\psi_{ij}\>u^{-j}$, \;$i\in\Z_{>0}$.

\begin{lem}
\label{expand}
We have
\vvn.3>
\beq
\label{psi12}
\psi_1(u)\,=\,-\,\sum_{a=1}^N\,\frac1{u-z_a}\;,\qquad
\psi_2(u)\,=\,\sum_{a=1}^N\,\frac1{u-z_a}\,
\Bigl(-\>\yh_a\>+\>\sum_{b\ne a} \frac1{z_a-z_b}\,\Bigr)\,.
\vv-.3>
\eeq
and
\vvn-.2>
\beq
\label{psix1}
\sum_{i=1}^\infty\,\psi_{i1}\,x^{-i}\,=\,-\tr\bigl((x-Q)^{-1}\bigr)\,.
\eeq
\end{lem}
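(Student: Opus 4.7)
The plan is to expand $\psi(u,x)=\det\bigl(1-(u-Z)^{-1}(x-Q)^{-1}\bigr)$ directly as a power series at $u,x\to\infty$, using the identity
\[
\log\det(1-M)\,=\,-\sum_{k=1}^\infty\frac{1}{k}\>\tr(M^k)
\]
with $M=(u-Z)^{-1}(x-Q)^{-1}$, and then read off the required coefficients. The matrix $(u-Z)^{-1}$ is diagonal with entries $(u-z_a)^{-1}$, and $(x-Q)^{-1}=x^{-1}+Qx^{-2}+Q^2x^{-3}+\dots$ in $\End(\C^n)$, so every term in the expansion is an explicit rational function of the $z_a$ and the $\yh_a$.

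For $\psi_1(u)$, I would extract the coefficient of $x^{-1}$: only the linear-in-$M$ contribution $-\tr(M)$ reaches this order, and its value is $-\tr\bigl((u-Z)^{-1}\bigr)=-\sum_{a=1}^n(u-z_a)^{-1}$. For $\psi_2(u)$, the coefficient of $x^{-2}$ receives three pieces: $-\tr\bigl((u-Z)^{-1}Q\bigr)$ from $-\tr(M)$ at the next order; $-\tfrac12\tr\bigl((u-Z)^{-2}\bigr)$ from $-\tfrac12\tr(M^2)$; and $\tfrac12\bigl(\tr((u-Z)^{-1})\bigr)^2$, coming from the square of the linear term when the logarithm is re-exponentiated. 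Since $(u-Z)^{-1}$ is diagonal and $Q_{aa}=\yh_a$, the first piece equals $-\sum_a\yh_a/(u-z_a)$. The remaining two combine via the elementary partial-fraction identity
\[
\Bigl(\,\sum_{a=1}^n\frac{1}{u-z_a}\Bigr)^{\!2}\,-\,\sum_{a=1}^n\frac{1}{(u-z_a)^2}\,=\,\sum_{a\ne b}\frac{1}{(z_a-z_b)(u-z_a)}
\]
into $\sum_a(u-z_a)^{-1}\sum_{b\ne a}(z_a-z_b)^{-1}$, and adding the three contributions yields the stated formula.

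For \Ref{psix1}, the cleaner route is to extract the coefficient of $u^{-1}$ rather than of $x^{-i}$. Because $(u-Z)^{-1}=u^{-1}I+O(u^{-2})$, the $k$-th summand $-k^{-1}\tr(M^k)$ in $\log\det(1-M)$ is $O(u^{-k})$; hence only the $k=1$ term contributes at order $u^{-1}$, and it gives $-u^{-1}\tr\bigl((x-Q)^{-1}\bigr)$. Matching with expansion~\Ref{psiij} produces $\sum_{i\ge1}\psi_{i1}\>x^{-i}=-\tr\bigl((x-Q)^{-1}\bigr)$.

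The whole argument is essentially bookkeeping; no step presents a genuine obstacle. The only mildly tricky point is the partial-fraction identity used in the $\psi_2$ calculation, which is verified by pairing the terms $a<b$ and $b<a$ in the double sum.
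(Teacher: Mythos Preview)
Your approach is correct and is essentially what the paper has in mind: its proof reads in full ``The proof is straightforward from formulae~\Ref{Q},~\Ref{psi}.''  Expanding $\log\det(1-M)$ in traces is a clean way to organize that straightforward computation.

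One small slip to fix: the displayed partial-fraction identity is off by a factor of $2$.  The correct statement is
\[
\Bigl(\,\sum_{a=1}^n\frac{1}{u-z_a}\Bigr)^{\!2}-\sum_{a=1}^n\frac{1}{(u-z_a)^2}
\;=\;\sum_{a\ne b}\frac{1}{(u-z_a)(u-z_b)}
\;=\;2\sum_{a\ne b}\frac{1}{(z_a-z_b)(u-z_a)}\,,
\]
which you see by writing $\dfrac{1}{(u-z_a)(u-z_b)}=\dfrac{1}{z_a-z_b}\Bigl(\dfrac{1}{u-z_a}-\dfrac{1}{u-z_b}\Bigr)$ and relabelling.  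Since the ``remaining two'' pieces carry the prefactor $\tfrac12$, this extra $2$ is exactly what makes your final expression for $\psi_2(u)$ come out right; as written, the identity and the conclusion are inconsistent with each other.  Adjust the display and the argument is complete.
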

\begin{proof}
The proof is straightforward from formulae~\Ref{Q},~\Ref{psi}.
\end{proof}

\subsection{Proof of Theorem~\ref{generate}}
Denote \;$\D_\reg=P(u)\,\D$.
By Theorem~3.1 in~\cite{MTV2}, we have
\vvn-.5>
\beq
\label{Dreg}
\D_{reg}=\sum_{i=0}^N\sum_{a=0}^nA_{ia}\>u^a\der^i\ ,
\qquad
A_{ia}\in \End(V^{\otimes n})\ ,
\vv-.2>
\eeq
and
\vvn-.4>
\be
\sum_{a=0}^n\,A_{Na}\>u^a\,=\,P(u)\,,\qquad
\sum_{i=0}^N\,A_{in}\>\der^i\,=\,R(\der_u)\,,\qquad
R(x)=\prod_{i=1}^N\,(x-K_i)\,.
\ee

\vsk.3>
Let $v \in \otimes_{a=1}^nV(z_a)$ be an eigenvector of the Bethe algebra,
$A_{ia}v= \al_{ia}v$, \,$\al_{ia}\in\C$, for all $(i,a)$.
Consider a scalar differential operator
\be
D_v\,=\,\sum_{i=0}^N\sum_{a=0}^n\,\al_{ia}\,x^i\der_x^a\ ,
\vv.2>
\ee
Notice that we changed \,$u\mapsto\der_x$, \;$\der_u\mapsto x$ compared
with~\Ref{Dreg}. By Theorem~3.1 in~\cite{MTV2} and Theorem~12.1.1
in~\cite{MTV4}, the kernel of $D_v$ is generated by the functions
$(x+\mu_a)\>e^{z_ax}$, \,$a=1\lc n$, \,with suitable $\mu_a\in\C$. Let
\be
\yh_a\,=\,-\>\mu_a-\sum_{b\neq a}\,\frac1{z_a-z_b}\;,\qquad a=1\lc n\,.
\vvgood
\ee

\begin{lem}
\label{lem on b}
We have \,$H_av=h_av$ \,for all \,$a=1\lc n$.
\end{lem}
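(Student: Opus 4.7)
The plan is to prove the stronger scalar identity
\be
\varPsi(u,x)\,v\,=\,\psi(u,x,z_1\lc z_n,h_1\lc h_n)\,v\>,
\ee
as an equality of $V^{\otimes n}$-valued rational functions, and then to extract $H_av=h_av$ by comparing the coefficients of $x^{-2}$. By the lemma immediately preceding Theorem~\ref{generate} one has $\varPsi_2(u)v=\sum_a(u-z_a)^{-1}(-H_a+c_a)v$ with $c_a:=\sum_{b\ne a}(z_a-z_b)^{-1}$, while Lemma~\ref{expand} gives $\psi_2(u,z,h)=\sum_a(u-z_a)^{-1}(-h_a+c_a)$; so taking residues at $u=z_a$ of $\varPsi_2(u)v=\psi_2(u,z,h)v$ yields exactly $H_av=h_av$.

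To establish the scalar identity, let $\beta_i(u)$ denote the eigenvalue of $B_i^\kk(u)$ on $v$, so that $\D$ acts on $v$-valued functions of $u$ as the scalar operator $\der_u^N+\sum_i\beta_i(u)\der_u^{N-i}$. From~\Ref{Dreg} we have $\sum_iA_{in}\der_u^i=R(\der_u)$ and hence $\sum_i\al_{in}x^i=R(x)$, so the coefficient of $\der_x^n$ in $D_v=\sum_{i,a}\al_{ia}x^i\der_x^a$ equals $R(x)$. By Theorem~3.1 of~\cite{MTV2} and Theorem~12.1.1 of~\cite{MTV4} the kernel of $D_v$ is the $n$-dimensional span of $f_1\lc f_n$, which is also the kernel of the monic operator $D$ of~\Ref{D}; since $D_v-R(x)\>D$ then has order at most $n-1$ and kernel containing an $n$-dimensional subspace, it must vanish, so
\be
D_v\,=\,R(x)\,D\,.
\ee

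Now the polynomial $\sum_{i,a}\al_{ia}x^iu^a$ can be computed in two ways. Applying $\D_\reg=P(u)\>\D$ to $e^{ux}v$ and using $A_{ia}v=\al_{ia}v$ together with $\D(e^{ux}v)=(x^N+\sum_i\beta_i(u)x^{N-i})e^{ux}v$ yields
\be
\sum_{i,a}\al_{ia}x^iu^a\,=\,P(u)\,\bigl(x^N+\tsty\sum_i\beta_i(u)\>x^{N-i}\bigr)\,.
\ee
Applying $D_v=R(x)\>D$ to $e^{ux}$ in the $x$-variable, with $D(e^{ux})=(u^n+\sum_aC_a(x)u^{n-a})e^{ux}$ from the definition of $C_a(x)$, yields
\be
\sum_{i,a}\al_{ia}x^iu^a\,=\,R(x)\,\bigl(u^n+\tsty\sum_aC_a(x)\>u^{n-a}\bigr)\,.
\ee
Equating the two and dividing by $P(u)R(x)$ identifies $\varPsi(u,x)v$ with $\bigl(u^n+\sum_aC_a(x)u^{n-a}\bigr)/P(u)\cdot v$, which equals $\psi(u,x,z,h)v$ by Lemma~\ref{WW}. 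The only non-routine step is the uniqueness argument $D_v=R(x)\>D$; everything else is bookkeeping.
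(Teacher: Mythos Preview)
Your proof is correct and follows essentially the same route as the paper's own argument: establish $D_v=R(x)\,D$, deduce that the eigenvalue of $\varPsi(u,x)$ on $v$ equals $\psi(u,x,z,h)$, and then read off $H_av=h_av$ by matching the $x^{-2}$ terms via \Ref{Psi12} and \Ref{psi12}. The paper merely asserts $D_v=R(x)\,D$ and says the rest ``follows from Lemma~\ref{WW} and formulae~\Ref{PsiK}, \Ref{PsiiK}, \Ref{Psi}''; you have supplied the missing details --- the order/kernel uniqueness argument and the explicit two-way evaluation of $\sum_{i,a}\al_{ia}x^iu^a$ via $\D_\reg(e^{ux}v)$ and $D_v(e^{ux})$ --- which make the deduction transparent.
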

\begin{proof}
We have \>$D_v=R(x)\,D$, where $D$ is given by~\Ref{D}. Then Lemma~\ref{WW}
and formulae~\Ref{PsiK}, \Ref{PsiiK}, \Ref{Psi} yield that the eigenvalues
of the operators $\varPsi_{ij}$ are the numbers $\psi_{ij}$ given
by~\Ref{psiij}: \,$\varPsi_{ij}v=\psi_{ij}v$. The
claim follows from comparing formulae~\Ref{Psi12} and~\Ref{psi12}.
\end{proof}

By Theorem~10.5.1 in~\cite{MTV4}, if $\kk_1\lc\kk_N$ and $z_1\lc z_n$
are generic, then the Bethe algebra of $\otimes_{a=1}^nV(z_a)$ has
an eigenbasis. Hence, by Lemmas~\ref{Wux} and~\ref{lem on b}, for such
$\kk_1\lc\kk_N$, $z_1\lc z_n$ we have
\be
\varPsi(u,x)\,=\,\psi(u,x,z_1\lc z_n,H_1\lc H_n)\,.
\vv.4>
\ee
Since both sides of this equality are meromorphic functions of
$\kk_1\lc\kk_N$ and $z_1\lc z_n$, the equality holds for
all $\kk_1\lc\kk_N$, $z_1\lc z_n$. The theorem is proved.
\qed

\section{Bethe algebra and functions on the Calogero-Moser space}
\label{CM}

\subsection{Calogero-Moser space $\Cc_n$}
Let \>$\Mc_n$ be the space of \,${n\times n}$ complex matrices.
The group $GL_n$ acts on \>$\Mc_n\oplus\Mc_n$ by conjugation,
\,$g:(X,Y)\mapsto(gXg^{-1},\>gYg^{-1})$\>.
\vvn.16>
Denote \,$\Fh_n=\C\>[\Mc_n\oplus\Mc_n]^{GL_n}$.

\smallskip
Let \,$\Cc_n\subset\Mc_n\oplus\Mc_n$ be the subset of pairs $(X,Y)$ with
the matrix $[X,Y]+1$ having rank one. The set \,$\Cc_n$ is $GL_n$-invariant.
The algebra $\Fc_n=\Fh_n|_{\Cc_n}$ is, by definition, the algebra of functions
on the $n$-th Calogero-Moser space, see~\cite{Wi}.

\smallskip
Consider a function
\vvn-.1>
\beq
\label{phi}
\pho(u,x,X,Y)\,=\,\det\bigl(1-(u-Y)^{-1}\>(x-X)^{-1}\bigr)\,,
\vv.3>
\eeq
depending on matrices $X,Y$ and variables $u,x$.
It has an expansion as $u\to\infty$, \,$x\to\infty$:
\vvn.2>
\beq
\label{phiij}
\pho(u,x,X,Y)\,=\,1\,+\>
\sum_{i=1}^\infty\,\sum_{j=1}^\infty\,\pho_{ij}(X,Y)\>u^{-j}\>x^{-i}\,
\vv-.3>
\eeq
with \,$\pho_{ij}\in\Fh_n$ \,for any $(i,j)$.

\smallskip

\begin{lem}[\cite{MTV6}]
\label{EG}
The algebra \,$\Fc_n$ is generated by the images of \,$\pho_{ij}$,
\,$i,j\in\Z_{>0}$.
\end{lem}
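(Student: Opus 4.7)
The plan is to compare the subalgebra of $\Fc_n$ generated by the $\pho_{ij}$ with the subalgebra generated by the traces $\tr(X^aY^b)|_{\Cc_n}$, $a,b\ge 0$, and then invoke the known fact that the latter exhaust $\Fc_n$. For the comparison, I would work from the logarithmic expansion
\[
\log\pho(u,x,X,Y)\,=\,-\sum_{k\ge 1}\frac{1}{k}\,\tr\bigl[\bigl((u-Y)^{-1}(x-X)^{-1}\bigr)^{k}\bigr]\,.
\]
Expanding each resolvent as a power series, the coefficient of $u^{-j}x^{-i}$ in $\log\pho$ is $-\tr(X^{i-1}Y^{j-1})$ (from $k=1$) plus, for $i,j\ge 2$, traces of alternating words $\tr(X^{a_1}Y^{b_1}\cdots X^{a_k}Y^{b_k})$ with $\sum_\alpha a_\alpha=i-k$ and $\sum_\alpha b_\alpha=j-k$. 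Exponentiating back, each $\pho_{ij}$ equals $-\tr(X^{i-1}Y^{j-1})$ plus a polynomial in the $\pho_{ab}$ with $a+b<i+j$ together with such longer-word traces, whose total $(X,Y)$-degree is $i+j-2k\le i+j-4$.

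For the second ingredient, I would use the rank-one condition $[X,Y]+1=vw^T$ defining $\Cc_n$. Rewriting $YX=XY+1-vw^T$ lets one push every $Y$ past every $X$ in a word; a trace containing a $vw^T$ factor equals $\tr(vw^T(\cdots))=\tr((\cdots))+\tr([X,Y](\cdots))$, and iterating this bookkeeping reduces the trace of any word in $X,Y$ on $\Cc_n$ to a polynomial in the $\tr(X^aY^b)|_{\Cc_n}$. Together with the classical theorem of Procesi--Sibirskii that $\Fh_n$ is generated by traces of words in $X,Y$, this yields that $\Fc_n$ is generated by $\{\tr(X^aY^b)|_{\Cc_n}:a,b\ge 0\}$.

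Finally, a triangular induction on the total degree $i+j$ completes the argument: modulo the subalgebra generated by $\pho_{ab}$ with $a+b<i+j$, we have $\pho_{ij}\equiv-\tr(X^{i-1}Y^{j-1})|_{\Cc_n}$, so by induction the algebra generated by all $\pho_{ij}$ contains every $\tr(X^aY^b)|_{\Cc_n}$ and hence equals $\Fc_n$. The main obstacle I anticipate is the degree bookkeeping in the rank-one reduction step: a naive application of $[X,Y]=-1+vw^T$ can generate traces of the same or larger length, so one needs a carefully chosen filtration on trace invariants under which the reduction is strictly decreasing and the induction genuinely closes.
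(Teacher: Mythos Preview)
The paper does not actually prove this lemma: it is stated with a citation to \cite{MTV6}, and no argument is given in the present text. (The label ``EG'' most likely alludes to Etingof--Ginzburg, whose identification of $\Fc_n$ with the spherical subalgebra of the rational Cherednik algebra of $\mathfrak{S}_n$ is the standard route to this kind of generation statement; the PBW theorem for the Cherednik algebra then does the work that your rank-one reduction is meant to do.) So there is no in-paper proof to compare against, and any correct argument you supply would be an addition rather than a reconstruction.

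On the substance of your sketch: the triangularity you extract from $\log\pho$ is correct and is exactly the right first move. The coefficient of $u^{-j}x^{-i}$ in $\log\pho$ is indeed $-\tr(X^{i-1}Y^{j-1})$ plus traces of alternating words of strictly smaller total degree, and exponentiating only adds products of lower-index log-coefficients. So the equivalence ``algebra generated by the $\pho_{ij}|_{\Cc_n}$'' $=$ ``algebra generated by the $\tr(X^{i-1}Y^{j-1})|_{\Cc_n}$'' would follow cleanly \emph{once} you know that on $\Cc_n$ every trace of a word in $X,Y$ lies in the subalgebra generated by the $\tr(X^aY^b)$.

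That last step is where your proposal is genuinely incomplete, and the obstacle you flag is real, not merely cosmetic. The substitution $vw^T=[X,Y]+1$ does let you split a sandwich $w^T A\,vw^T B\,v$ into $(w^T A v)(w^T B v)$, so the ``open-string'' invariants $w^T(\text{word})v$ can be straightened to the normally ordered ones $w^T X^aY^b v$ by induction on (length, number of inversions). The trouble is the final conversion of $w^T X^aY^b v$ back to trace invariants: writing $w^T X^aY^b v=\tr\bigl(([X,Y]+1)X^aY^b\bigr)$ reintroduces $\tr([X,Y]X^aY^b)$, and expanding this commutator produces traces such as $\tr(XYX^aY^b)$ whose number of inversions is no smaller than before. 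There is no evident lex filtration on trace-words alone under which this step is strictly decreasing, so the induction as you describe it does not close. In the literature this difficulty is bypassed precisely by the Etingof--Ginzburg isomorphism with the spherical Cherednik algebra, which gives a PBW-type basis for $\Fc_n$ without any word-rewriting; absent that, you would need an additional idea (for instance, working simultaneously with traces and sandwiches and proving a joint recursion for the $S_{ab}=w^TX^aY^bv$ in terms of the $T_{ab}=\tr(X^aY^b)$ that actually terminates).
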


\subsection{Bethe algebra and functions on $\Cc_n$}
In this section we treat $\kk_1\lc\kk_N$ and $z_1\lc z_n$ as variables.
Set
\vvn.1>
\be
\Ec_{N,n}\>=\,\End(V^{\ox n})\ox\C\>[\kk_1\lc\kk_N,z_1\lc z_n]\;.
\vv.3>
\ee
We identify the algebras $\End(V^{\ox n})$ and $\C\>[\kk_1\lc\kk_N,z_1\lc z_n]$
with the respective subalgebras $\End(V^{\ox n})\ox 1$ and
$1\ox\C\>[\kk_1\lc\kk_N,z_1\lc z_n]$ \,of \,$\Ec_{N,n}$.

The operators $B_{ij}$ and $\Psi_{ij}$, defined in Section~\ref{main},
depend on $K_1\lc K_N$, $z_1\lc z_n$ polynomially, so we consider them
as elements of \,$\Ec_{N,n}$. Denote by \,$\Bc_{N,n}$ the unital
subalgebra of \,$\Ec_{N,n}$ generated by $B_{ij}$, \,$i=1\lc N$,
\,$j\in\Z_{>0}\>$.

\begin{lem}
\label{Bc}
The algebra \,$\Bc_{N,n}$ is generated by \,$\varPsi_{ij}$, \,$i=1\lc N$,
\,$j\in\Z_{>0}\>$, and symmetric polynomials in $\kk_1\lc\kk_N$.
\end{lem}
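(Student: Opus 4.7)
The plan is to read the lemma off from the defining identity~\Ref{PsiK}. The essential observation is that $\varPsi^\kk(u,x)$ and the polynomial $x^N+\sum_{i=1}^N B_i^\kk(u)\,x^{N-i}$ differ only by multiplication by $\prod_{i=1}^N(x-K_i)^{-1}$, whose expansion in $1/x$ has symmetric polynomials in $K_1\lc K_N$ as coefficients. Thus the two families $\{B_{i,j}\}$ and $\{\varPsi_{i,j}\}$ are related by an invertible $\C[K]^{S_N}$-linear change of variables, and the lemma should follow by writing out this change of variables in both directions.

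First I would clear denominators in~\Ref{PsiK}: expanding $\prod_{i=1}^N(x-K_i)=\sum_{k=0}^N(-1)^k e_k(K)\,x^{N-k}$ with the elementary symmetric polynomials $e_k$, and equating coefficients of $x^{N-i}$ on both sides of
\[
\varPsi^\kk(u,x)\,\prod_{i=1}^N(x-K_i)\,=\,x^N+\sum_{i=1}^N B_i^\kk(u)\,x^{N-i}
\]
gives $B_i^\kk(u)=\sum_{k=0}^i(-1)^k e_k(K)\,\varPsi_{i-k}^\kk(u)$ with the convention $\varPsi_0^\kk=1$. Taking the $u^{-j}$ coefficient for $j>0$ makes the $k=i$ term drop out (since $\varPsi_{0,j}=0$), producing
\[
B_{i,j}\,=\,\sum_{k=0}^{i-1}(-1)^k e_k(K)\,\varPsi_{i-k,j}.
\]
This exhibits every generator $B_{i,j}$ of $\Bc_{N,n}$ as a polynomial in the $\varPsi_{m,j}$'s with coefficients in $\C[K]^{S_N}$, giving one inclusion.

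For the reverse inclusion I would expand the rational factor as $\prod_{i=1}^N(x-K_i)^{-1}=\sum_{k\ge 0}h_k(K)\,x^{-N-k}$ with the complete homogeneous symmetric polynomials $h_k$, substitute into~\Ref{PsiK}, and match coefficients of $x^{-m}$. After taking the $u^{-j}$ coefficient for $j>0$ this yields
\[
\varPsi_{m,j}\,=\,\sum_{i=1}^{\min(m,N)}h_{m-i}(K)\,B_{i,j},
\]
which realizes each $\varPsi_{m,j}$ as a $\C[K]^{S_N}$-polynomial combination of the generators of $\Bc_{N,n}$.

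Combining the two conversion formulas gives the desired equality of subalgebras, provided one also verifies that $\C[K]^{S_N}\subseteq\Bc_{N,n}$. This last verification is the main obstacle, since it is not visible from the $j>0$ generators appearing in the definition of $\Bc_{N,n}$. It is supplied by~\Ref{Bi0}, which identifies the constant-in-$u$ coefficients $B_{i,0}=(-1)^i e_i(K)$: these constant terms give the elementary symmetric polynomials in $K_1\lc K_N$, hence all of $\C[K]^{S_N}$, and close the chain of inclusions. This is the step where I would expect to do the most bookkeeping, because it requires interpreting the Bethe subalgebra as containing these $u^0$ symmetric scalars on the same footing as the nontrivial $B_{i,j}$ with $j>0$.
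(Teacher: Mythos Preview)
Your proof is correct and takes essentially the same approach as the paper: first invoke~\Ref{Bi0} to place the symmetric polynomials in $K_1\lc K_N$ inside $\Bc_{N,n}$, then use~\Ref{PsiK} to see that the $B_{ij}$ and the $\varPsi_{ij}$ are related by an invertible $\C[K]^{S_N}$-linear change, with your version making the transition matrices explicit via the $e_k$'s and $h_k$'s. The paper handles your flagged concern about the $u^0$ coefficients in the same way---it simply cites~\Ref{Bi0} and moves on---so the ``interpretation'' step you single out is not an extra gap in your argument relative to the paper's.
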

\begin{proof}
By formula~\Ref{Bi0}, we have
\be
x^N\>+\>\sum_{i=1}^N B_{i0}\,x^{N-i}\>=\,\prod_{i=1}^N\,(x-K_i)\,,
\vv.2>
\ee
so symmetric polynomials in $\kk_1\lc\kk_N$ belong to \,$\Bc_{N,n}$.
Formula~\Ref{PsiK} yields
\vvn.2>
\be
\Bigl(\>x^N+\sum_{i=1}^N\,\sum_{j=0}^\infty B_{ij}\,u^{-j}\>x^{N-i}\,\Bigr)
\prod_{i=1}^N\,\frac1{x-K_i}\;=\,
1+\sum_{i=1}^\infty\,\sum_{i=1}^\infty\,\varPsi_{ij}\,u^{-j}\>x^{-i}\,.
\vv.1>
\ee
Therefore, the elements \,$\varPsi_{ij}$ are linear combinations of
the elements \,$B_{ij}$ with coefficients being symmetric polynomials
in $\kk_1\lc\kk_N$, and vice versa. That proves the claim.
\end{proof}

Let \,$Z\>,Q$ \,be the matrices given by~\Ref{Z}, \Ref{Q}. For any $f\in\Fh_n$,
\>define a function $\bar f$ of the variables \,$z_1\lc,z_n$, $h_1\lc h_n$
\,by the formula
\vvn.3>
\be
\bar f(z_1\lc z_n,\yh_1\lc\yh_n)\,=\,f(Q,Z)\,.
\vv.2>
\ee

\begin{lem}
\label{fb}
The function \,$\bar f$ depends only on the image of \,$f$ in \,$\Fc_n$.
\end{lem}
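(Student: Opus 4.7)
The plan is to reduce the statement to a single observation: for every choice of distinct $z_1\lc z_n$ and arbitrary $\yh_1\lc\yh_n$, the pair $(Q,Z)$ is an element of $\Cc_n$. Granting this, any $f\in\Fh_n$ whose restriction to $\Cc_n$ vanishes satisfies $f(Q,Z)=0$ for all admissible values of the parameters, i.e.\ $\bar f\equiv 0$. Applying this to the difference of two representatives $f,g\in\Fh_n$ of the same element of $\Fc_n$ gives $\bar f=\bar g$, which is exactly the claim.

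The verification that $(Q,Z)\in\Cc_n$ is a direct matrix computation. Since $Z=\diag(z_1\lc z_n)$ is diagonal, the commutator satisfies $[Q,Z]_{ab}=(z_b-z_a)\>Q_{ab}$. For $a\ne b$ the definition \Ref{Q} gives $Q_{ab}=1/(z_b-z_a)$, so $[Q,Z]_{ab}=1$; for $a=b$ the diagonal factor $z_b-z_a$ is zero, so $[Q,Z]_{aa}=0$. Consequently $[Q,Z]+1$ equals the matrix with all entries equal to $1$, which is manifestly of rank one. This is also the content of the earlier remark following Theorem~\ref{generate}, so the computation merely makes that remark explicit.

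With this in hand, the proof is immediate: suppose $f,g\in\Fh_n$ have the same image in $\Fc_n=\Fh_n|_{\Cc_n}$, so that $(f-g)|_{\Cc_n}=0$. Since $(Q,Z)\in\Cc_n$ for every $(z_1\lc z_n,\yh_1\lc\yh_n)$ in the domain of definition of $\bar{}$, we get $(f-g)(Q,Z)=0$ as a function of these parameters, i.e.\ $\bar f=\bar g$. Hence the assignment $f\mapsto\bar f$ factors through the quotient map $\Fh_n\to\Fc_n$.

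There is no substantive obstacle: the only nontrivial input is the rank-one identity for $[Q,Z]+1$, which is a one-line calculation. The real utility of the lemma lies in what it enables later, namely using Lemma~\ref{EG} to conclude that the functions $\overline{\pho_{ij}}$ generate the same collection of scalar functions in $z_1\lc z_n,\yh_1\lc\yh_n$ as does the full algebra $\Fh_n$ evaluated at $(Q,Z)$.
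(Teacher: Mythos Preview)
Your proof is correct and follows exactly the paper's approach: the paper's proof consists of the single observation that $[Q,Z]+1$ has rank one, so $(Q,Z)\in\Cc_n$. You have simply made the rank-one computation explicit and spelled out the (immediate) consequence for $\bar f$.
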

\begin{proof}
The matrix \,$[Q,Z]+1$ \,has rank one, so the pair $(Q,Z)$ belongs to
\,$\Cc_n$,
\end{proof}

\begin{thm}
\label{second}
For any \,$f\in\Fh_n$, we have $\bar f(z_1\lc z_n,H_1\lc H_n)\in\Bc_{N,n}$.
In particular, $f(z_1\lc z_n,H_1\lc H_n)$ is a polynomial in
$z_1\lc z_n$.
\end{thm}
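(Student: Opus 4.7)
The plan is to combine Theorem~\ref{generate}, Lemma~\ref{EG}, and Lemma~\ref{fb}. The starting observation is to compare formulae~\Ref{psi} and~\Ref{phi}: setting $X=Q$, $Y=Z$ in \Ref{phi} literally recovers $\psi$, so
\be
\psi(u,x,z_1\lc z_n,\yh_1\lc\yh_n)\,=\,\pho(u,x,Q,Z)\,,
\ee
and matching the expansions~\Ref{psiij} and~\Ref{phiij} yields the coefficient identity $\psi_{ij}(z_1\lc z_n,\yh_1\lc\yh_n)=\pho_{ij}(Q,Z)=\bar\pho_{ij}(z_1\lc z_n,\yh_1\lc\yh_n)$. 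Under this identification, Theorem~\ref{generate} reads
\be
\varPsi_{ij}\,=\,\bar\pho_{ij}(z_1\lc z_n,H_1\lc H_n)\,,
\ee
so Theorem~\ref{second} is already known for the special family $f=\pho_{ij}$.

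For general $f\in\Fh_n$, I would invoke Lemma~\ref{EG} to produce a polynomial $P$ in countably many variables and an element $g=P(\pho_{ij})\in\Fh_n$ whose image in $\Fc_n$ coincides with that of $f$. By Lemma~\ref{fb}, the functions $\bar f$ and $\bar g=P(\bar\pho_{ij})$ of the variables $z_1\lc z_n,\yh_1\lc\yh_n$ coincide. Since each $\bar\pho_{ij}$ is polynomial in the $\yh_a$ with coefficients rational in the $z_a$, and since the Gaudin Hamiltonians $H_1\lc H_n$ pairwise commute, the substitution $\yh_a\mapsto H_a$ is well-defined and compatible with the polynomial operation $P$, giving
\be
\bar f(z_1\lc z_n,H_1\lc H_n)\,=\,P\bigl(\bar\pho_{ij}(z_1\lc z_n,H_1\lc H_n)\bigr)\,=\,P(\varPsi_{ij})\,,
\ee
which lies in $\Bc_{N,n}$ by Lemma~\ref{Bc}.

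The ``in particular'' assertion is then immediate from the definition $\Bc_{N,n}\subset\Ec_{N,n}=\End(V^{\ox n})\ox\C\>[\kk_1\lc\kk_N,z_1\lc z_n]$: even though $\bar f$ has apparent poles along the hyperplanes $z_a=z_b$, and each individual $H_a$ has such poles as well, all of them must cancel in the combination $\bar f(z_1\lc z_n,H_1\lc H_n)$, leaving a polynomial in $z_1\lc z_n$. I do not anticipate a genuine obstacle, since all the substantive work sits in Theorem~\ref{generate} and Lemma~\ref{EG}; the only point deserving a moment of care is the legitimacy of substituting the commuting operators $H_a$ for the commuting variables $\yh_a$, which is underwritten by the polynomial dependence of $\bar\pho_{ij}$ on the $\yh_a$.
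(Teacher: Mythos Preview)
Your proposal is correct and follows essentially the same route as the paper's proof: reduce to the generators $\pho_{ij}$ via Lemmas~\ref{fb} and~\ref{EG}, identify $\bar\pho_{ij}=\psi_{ij}$ from \Ref{phi}--\Ref{phiij} and \Ref{psi}--\Ref{psiij}, apply Theorem~\ref{generate} to get $\psi_{ij}(z,H)=\varPsi_{ij}$, and conclude by Lemma~\ref{Bc}. The paper compresses this into three lines, while you spell out the legitimacy of the substitution $\yh_a\mapsto H_a$ and the ``in particular'' clause, but the argument is the same.
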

\begin{proof}
By Lemmas~\ref{fb} and~\ref{EG}, it suffices to prove the claim for the
functions \,$\pho_{ij}(X,Y)$. Since \,$\bar\pho_{ij}=\psi_{ij}$ by~\Ref{phi},
\Ref{phiij}, \Ref{psi}, \Ref{psiij}, and
\,$\psi_{ij}(z_1\lc z_n,H_1\lc H_n)=\varPsi_{ij}$ \,by Theorem~\ref{generate},
the statement follows from Lemma~\ref{Bc}.
\end{proof}

\begin{example}
Let \,$N=n=2$. \,Then \,$Z=\diag(z_1,z_2)$\>,
\ $Q\>=\>\left(\! \begin{array}{cccc} \yh_1 & (z_2-z_1)^{-1}\\[2pt]
(z_1-z_2)^{-1}\! & \!\yh_2 \end{array} \!\right)$\,,
\vvn.3>
\be
H_1\,=\,\kk_1\>e_{11}^{(1)}\,+\,
\kk_2\>e_{22}^{(1)}\,+\>\frac\Om{z_1-z_2}\,, \qquad
H_2\,=\,\kk_1\>e_{11}^{(2)}\,+\,
\kk_2\>e_{22}^{(2)}\,+\>\frac \Om{z_2-z_1}\,,
\ee
\be
\Om\,=\,e_{11}^{(1)}e_{11}^{(2)}+\>e_{12}^{(1)}e_{21}^{(2)}+\>
e_{21}^{(1)}e_{12}^{(2)}+\>e_{22}^{(1)}e_{22}^{(2)}\,.
\vv.5>
\ee
Let \,$f(X,Y)=\tr(X^2)$. \,Then
\,$\bar f(z_1,z_2,H_1,H_2)\>=\>H_1^2+\>H_2^2-\>2\>(z_1-z_2)^{-2}$
\,is a polynomial in $z_1,z_2$.
\end{example}

\begin{rem}
It is known that $\Fh_n$ is spanned by the functions
\;$\tr(X^{m_1}Y^{m_2}X^{m_3}Y^{m_4}\cdots{})$, \,where $m_1,m_2,\ldots{}$
are nonnegative integers, see~\cite{W}.
\end{rem}

Theorems~\ref{generate} and~\ref{second} show that the assignment
\,$\gm:f\mapsto\bar f(z_1\lc z_n,H_1\lc H_n)$
\,defines an algebra homomorphism \,$\Fh_n\to\Bc_{N,n}$ \,that sends
\,$\pho_{ij}$ to \,$\varPsi_{ij}$. By Lemma~\ref{fb}, this homomorphism
factors through \,$\Fc_n$\,. By Lemma~\ref{Bc}, the image of \,$\Fh_n$ tensored
with the algebra of symmetric polynomials in $\kk_1\lc\kk_N$ generate
\,$\Bc_{N,n}$.

\medskip
We show in~\cite{MTV6} that for \>$n=N$, the homomorphism \,$\gm$ \,induces
an isomorphism
\vvn.1>
of \,$\Fc_N$ with the quotient
of \,$\Bc_{N,N}$ by
the relations \,$\varPsi_{i1}=\>-\sum_{j=1}^N\,K_j^{i-1}$, \,$i\in\Z_{>0}$\,.
In other words, let
\vvn-.1>
\be
(V^{\ox N})_{\bf 1}=\,
\bigl\{\>v\in V^{\ox N}\ \big|\ \tsum_{a=1}^Ne_{ii}^{(a)}\>v\,=\,v\,,
\ i=1\lc N\>\bigr\}\,.
\vv.3>
\ee
Each element of \,$\Bc_{N,N}$ induces an element of
\,${\End\bigl((V^{\ox N})_{\bf 1}\bigr)\ox\C\>[\kk_1\lc\kk_N,z_1\lc z_N]}$\,.
Then \,$\Fc_N$ is isomorphic to the image of \,$\Bc_{N,N}$ in
\,$\End\bigl((V^{\ox N})_{\bf 1}\bigr)\ox\C\>[\kk_1\lc\kk_N,z_1\lc z_N]$\,.

\bigskip

\end{document}